\documentclass[12pt,a4paper,reqno]{amsart}

\usepackage{amssymb,latexsym}
\usepackage{amsmath}
\usepackage{graphicx}
\usepackage{url}	
\usepackage{hyperref}

\usepackage[top=1.6in, bottom=1.3in, left=1.3in, right=1.3in]{geometry}

\allowdisplaybreaks

\DeclareMathOperator*{\lcm}{lcm}
\DeclareMathOperator*{\rad}{rad}

\theoremstyle{plain}
\numberwithin{equation}{section}
\newtheorem{thm}{Theorem}[section]
\newtheorem{theorem}[thm]{Theorem}
\newtheorem{lemma}[thm]{Lemma}
\newtheorem{proposition}[thm]{Proposition}

\allowdisplaybreaks[4]
\everymath{\displaystyle}

\begin{document}

\title[The Distribution of Self-Fibonacci Divisors]{The Distribution of Self-Fibonacci Divisors}
\author{Florian Luca}
\address{School of Mathematics\\
                University of the Witwatersrand\\
                PO Box 2050, Wits, South Africa}
\email{florian.luca@wits.ac.za}

\author{Emanuele Tron}
\address{Scuola Normale Superiore\\
                Piazza dei Cavalieri 7\\
                56126 Pisa\\
                Italy}
\email{emanuele.tron@sns.it}

\subjclass[2010]{11B39}

\begin{abstract}
Consider the positive integers $n$ such that $n$ divides the $n$-th Fibonacci number, and their counting function $A$. We prove that \[A(x)\leq x^{1-(1/2+o(1))\log\log\log x/\log\log x}.\]
\end{abstract}

\maketitle

\section{Introduction}
The Fibonacci numbers notoriously possess many arithmetical properties in relation to their indices. In this context, Fibonacci numbers divisibile by their index constitute a natural subject of study, yet there are relatively few substantial results concerning them in the literature.

Let $\mathcal A= \left \{ a_n \right \}_{n \in \mathbb N}$ be the increasing sequence of natural numbers such that $a_n$ divides $F_{a_n}$: this is OEIS A023172, and it starts \[1, \ 5, \ 12, \ 24, \ 25, \ 36, \ 48, \ 60, \ 72, \ 96, \ 108, \ 120,\  125, \ 144,\ 168, \ 180, \ \ldots \]
(as they have no common name, we dub them \textit{self-Fibonacci divisors}). Let moreover $A(x):= \# \{ n \leq x : n \in \mathcal A \} $ be its counting function.

This kind of sequences has already been considered by several authors; we limit ourselves to mentioning the current state-of-the-art result, due to Alba Gonz\'{a}lez--Luca--Pomerance--Shparlinski.
\begin{proposition}[\cite{A1}, Theorems 1.2 and 1.3] \label{aux}
\[ \left ( \frac 1 4 +o(1) \right ) \log x \leq \log A(x) \leq \log x - (1+o(1)) \sqrt{\log x \log \log x}  .\]
\end{proposition}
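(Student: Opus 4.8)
The plan is to translate everything into the language of the rank of apparition $z(m)=\min\{k\ge 1:m\mid F_k\}$, for which $m\mid F_n\iff z(m)\mid n$. First I would record the elementary facts $z(5)=5$ and $z(p)\mid p-\left(\frac 5p\right)$ for primes $p\neq 5$, so that $z(p)\le p+1<2p$ and hence $p\nmid z(p)$. Since for a prime power $p^a\|n$ the part of $z(p^a)$ prime to $p$ equals $z(p)$ while its $p$-part divides $p^a$, one gets $z(p^a)\mid n\iff z(p)\mid n$, and therefore the convenient reformulation
\[ n\in\mathcal A\iff z(p)\mid n\ \text{for every prime }p\mid n, \]
which I would prove first since it drives both inequalities: it exhibits $\mathcal A$ as the integers whose radical is ``closed'' under $p\mapsto z(p)$. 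I would also use Carmichael's primitive divisor theorem (for $k\notin\{1,2,6,12\}$, $F_k$ has a prime factor $p$ with $z(p)=k$, necessarily $\equiv\pm1\pmod k$ and so coprime to $k$) and the counting bound $\#\{p:z(p)\le t\}\ll t^{2}$, immediate from $\sum_{k\le t}\omega(F_k)\le\sum_{k\le t}\log_2 F_k\ll t^{2}$.

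For the lower bound I would exploit the multiplicative closure that the reformulation yields: if $d\in\mathcal A$ and $e\mid F_d$ is squarefree with $\gcd(e,d)=1$, then $ed\in\mathcal A$, since every prime of $e$ has entry point dividing $d\mid ed$ while the primes of $d$ retain their closure. The point to overcome is that simple families — e.g. $\{2^a3^b5^c\}$, or a fixed base times primes of $d$-dividing entry point — yield only $(\log x)^{O(1)}$ elements, because primes with $t$-smooth entry point number only $O(t^{2})$ up to any bound, so there is too little entropy to beat $x^{o(1)}$. To reach $x^{1/4+o(1)}$ I would instead build a \emph{self-supporting} family, iterating the engine so that the prime factors of $n$ supply one another's entry points and the entry points are allowed to be large; concretely, starting from a small $d_0\in\mathcal A$ one repeatedly multiplies by primitive prime divisors of $F_{d_i}$, and the exponent $1/4$ should emerge from optimizing the depth and branching of this tree against the size constraint $n\le x$. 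Organising this construction so that distinct parameter choices give distinct $n\le x$ in numbers $x^{1/4+o(1)}$ is, I expect, the main difficulty on this side.

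For the upper bound I would fix a threshold $y$ and split $\mathcal A\cap[1,x]$ according to $P(n)$, the largest prime factor of $n$. The $y$-smooth $n$ contribute at most $\Psi(x,y)=x\exp(-(1+o(1))\,u\log u)$ with $u=\log x/\log y$. If instead $p:=P(n)>y$, then $z(p)\mid n$ and, as $p\nmid z(p)$, in fact $p\,z(p)\mid n$; iterating $q_{i+1}=\gpf(z(q_i))$ from $q_0=p$ produces a strictly decreasing chain of primes all dividing $n$, so $n$ is a multiple of the $\lcm$ of the chain together with all the $z(q_i)$. Since the crude bound $\sum_{p>y}x/(p\,z(p))$ is too lossy — the primes with anomalously small $z(p)$ spoil it — I would combine the chain with the sparsity estimate $\#\{p:z(p)\le t\}\ll t^{2}$ to control exactly those exceptional primes, for which $p\,z(p)$ alone is not yet a large enough divisor of $n$. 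Balancing this rough contribution against $\Psi(x,y)$ forces $\log y\asymp\sqrt{\log x\log\log x}$ and a saving of order $\sqrt{\log x\log\log x}$ in the exponent.

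The hard part will be twofold. On the lower side it is the bookkeeping of the self-supporting construction, ensuring enough distinct admissible $n\le x$ to pin the constant $1/4$. On the upper side it is the precise balancing that upgrades the chain-plus-sparsity input into the clean constant $1$ in $\exp(-(1+o(1))\sqrt{\log x\log\log x})$, rather than merely the correct order. Both obstacles reduce to quantitative control of how small $z(p)$ can be, which is precisely where the estimate $\#\{p:z(p)\le t\}\ll t^{2}$ and the distribution of the prime factors of $F_k$ enter.
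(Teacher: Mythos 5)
First, a point of order: the paper does not prove Proposition~\ref{aux} at all --- it is imported wholesale from \cite{A1} --- so there is no internal proof to compare you against; your sketch has to be judged as a reconstruction of that external result. Your preliminaries are correct and are indeed the right normalisation (the closure criterion $n\in\mathcal A\iff z(p)\mid n$ for every prime $p\mid n$, the congruence $p\equiv\pm1\pmod{z(p)}$, and $\#\{p:z(p)\le t\}\ll t^2$). The gaps are in the two main arguments, and in both cases what is missing is the decisive idea, not the ``bookkeeping'' or ``balancing'' you defer.

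For the lower bound, the self-supporting tree of primitive prime divisors provably cannot reach $x^{1/4}$. Carmichael's theorem certifies only \emph{one} primitive prime $p_d$ of $F_d$, and that prime may be as large as $F_d\approx\phi^{\,d}$; nothing unconditional gives $F_d$ more than $\tau(d)+O(1)$ prime factors. Hence any product of primitive primes you can certify to lie below $x$ is constrained in the worst case by $\sum_{d\in S}d\le\log_\phi x$ over the set $S$ of indices used, and the number of such index sets is $\exp\bigl(O(\sqrt{\log x})\bigr)$; allowing repeated prime factors on top of this still leaves the count at $x^{o(1)}$. So the difficulty you flag is not organisational, it is fatal to the engine itself. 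The mechanism that actually produces a power of $x$ (and is, in essence, what \cite{A1} does) is orthogonal to primitive divisors: one uses $z(p)\mid p-\left(\frac p5\right)$ together with theorems on \emph{smooth shifted primes} (Erd\H{o}s/Friedlander-type results: for a positive proportion of primes $p\le T$, all prime factors of $p\mp1$ are at most $T^{\beta}$ for suitable $\beta<1$). Every such prime then has entry point dividing the single modulus $M=\prod_{q\le T^{\beta}}q^{\lfloor\log T/\log q\rfloor}$, which is smooth and has smooth, controllably small $z$-closure; multiplying that closure by products of $j\asymp\log x/\log T$ of these primes, with $T$ a suitable power of $\log x$, yields $\binom{\gg T/\log T}{j}=x^{c+o(1)}$ distinct members of $\mathcal A$ below $x$, and the constant $1/4$ comes out of the quantitative smoothness input. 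Your sketch contains no ingredient playing this role, and without one the entropy is simply not there: primes with small entry point are too scarce ($\ll t^2$ in total, only $\gg t$ certified) and may all be astronomically large.

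For the upper bound, the dichotomy and the divisor $p\,z(p)\mid n$ are the right start, but the two tools you name cannot give the stated constant. The aggregate bound $\#\{p:z(p)\le t\}\ll t^2$ must first be replaced by the per-index bound $\#\{q>y:z(q)=d\}\le(\log\phi)\,d/\log y$ (the product of these primes divides $F_d<\phi^{\,d}$); but even with it, and even adding Brun--Titchmarsh in the progressions $\pm1\bmod d$ plus the pigeonhole on distinct primes, the configuration with $\asymp d/\log y$ primes of entry point $d$ sitting just above $y$ for every $d\asymp\sqrt y$ is consistent with all of these constraints, and it forces $\sum_{q>y}1/(q\,z(q))\gg y^{-1/2-o(1)}$. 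Thus the non-smooth count cannot be pushed below $x/y^{1/2+o(1)}$ by such means, and balancing against $\Psi(x,y)$ yields $x\exp\bigl(-(\tfrac12+o(1))\sqrt{\log x\log\log x}\bigr)$: half the claimed constant. The $\gpf$-chain does not repair this, precisely because for the problematic primes $z(q)$ is small, so the chain collapses into primes below $y$ and its $\lcm$ barely exceeds $q\,z(q)$. Getting the constant $1$ (and, in the present paper, the stronger saving $\frac12\log x\log\log\log x/\log\log x$) requires genuinely heavier machinery --- here it is the Gordon--Pomerance bound of Lemma~\ref{lemma:GP}, which the paper imports from \cite{GP} as a black box rather than proves.
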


We improve the upper bound above as follows.
\begin{theorem} \label{main}
\begin{equation} \label{maineq}
 \log A(x) \leq \log x- \left ( \frac 1 2 +o(1) \right ) \frac{\log x \log \log \log x }{\log \log x}  . \end{equation}
\end{theorem}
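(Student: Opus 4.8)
The plan is to exploit the \emph{rank of apparition}. For a prime $p\neq 5$ let $z(p)$ be the least $m\ge 1$ with $p\mid F_m$; then $p\mid F_n$ if and only if $z(p)\mid n$, one has $z(p)\mid p-\left(\frac p5\right)$ (so $z(p)\le p+1$ and every prime dividing $z(p)$ is strictly below $p$), and $z(p)\ge\log_\phi p$ with $\phi=(1+\sqrt5)/2$; for prime powers $z(p^a)\mid p^{a-1}(p\pm1)$. Consequently, if $n\in\mathcal A$ then $z(p^a)\mid n$ for every $p^a\Vert n$, so the set of primes dividing $n$ is closed under the strictly descending operation $p\mapsto\{\text{prime divisors of }z(p)\}$. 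I would fix a threshold $y=y(x)$ and split the $n\le x$ lying in $\mathcal A$ according to whether $n$ is $y$-smooth or has a prime factor exceeding $y$.

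For the smooth part I would invoke the standard estimate $\Psi(x,y)=x\,\rho(u)^{1+o(1)}$ with $u=\log x/\log y$ and $\log\rho(u)=-(1+o(1))u\log u$ (valid here, where $u\to\infty$, $y=x^{o(1)}$, and $\log y\ge(\log\log x)^2$). Choosing
\[
\log y=(2+o(1))\,\frac{(\log\log x)^2}{\log\log\log x},
\]
so that a short computation gives $u\log u=(1/2+o(1))\frac{\log x\log\log\log x}{\log\log x}$, yields $\log\Psi(x,y)\le\log x-(1/2+o(1))\frac{\log x\log\log\log x}{\log\log x}$, matching \eqref{maineq}. This step is routine once $y$ is pinned down.

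The heart of the matter is the contribution $A_{\mathrm{large}}(x)$ of those $n$ possessing a prime factor $P>y$. Peeling off the largest prime, write $n=Pm$ with $P\nmid m$; since each $z(p^a)$ for $p^a\Vert m$ has all prime factors below $P$ and is therefore coprime to $P$, the condition $n\in\mathcal A$ is equivalent to $m\in\mathcal A$ together with $z(P)\mid m$, i.e.\ $P\mid F_m$. Thus
\[
A_{\mathrm{large}}(x)\le\sum_{\substack{m\in\mathcal A\\ m\le x/y}}N(m),\qquad N(m)=\#\{P:\ y<P\le x/m,\ z(P)\mid m\}.
\]
To bound $N(m)$ I would group the primes $P$ by the value $d=z(P)$, which must be a divisor of $m$ with $d\le x/m+1$; for each such $d$ the number of primes with $z(P)=d$ is at most $\log F_d/\log 2\ll d$ (their product divides $F_d<\phi^d$), while $P\ge d-1$. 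Summing this over the divisors $d\mid m$ and then over $m\in\mathcal A$ by partial summation, run as a bootstrap on the hypothesis $A(t)\le t\exp\!\bigl(-(1/2-\varepsilon)\tfrac{\log t\log\log\log t}{\log\log t}\bigr)$ for $t<x$, should reproduce the same bound at $x$ with the loss $\varepsilon$ absorbed.

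The main obstacle is precisely this last estimate. The dangerous range is $m$ near $\sqrt x$ (equivalently $P$ near $x/m$): there the crude bound $N(m)\ll m/\log y$ combined with $\sum_{m\in\mathcal A,\,m\le M}m\approx M\,A(M)$ yields only a saving of order $\exp(-\tfrac12 g(x))$ rather than $\exp(-g(x))$, where $g(x)=(1/2+o(1))\frac{\log x\log\log\log x}{\log\log x}$ is the target exponent, and so does not close the induction by itself. Overcoming this requires genuinely using the constraint $P\le x/m$ and the divisor structure of $m$, namely that the ranks $d=z(P)$ feeding $N(m)$ must divide $m$ and satisfy $d-1\le P\le x/m$, so that large $m$ admit only few eligible pairs $(d,P)$, rather than estimating through the total number of large prime factors of $F_m$. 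The secondary issues, namely primes $P$ occurring to a power $\ge 2$ and the prime-power ranks $z(p^a)$, are milder: a trivial divisor bound confines the corresponding $n$ to a set of size $x^{1/2+o(1)}$.
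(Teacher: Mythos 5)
There is a genuine gap, and it sits exactly where you locate it: the contribution of $n=Pm$ with $P\asymp x/m\asymp\sqrt{x}$. In that range your own estimates give $N(m)\ll\sum_{d\mid m}d/\log y\ll m^{1+o(1)}$, and partial summation against the inductive hypothesis $A(t)\le t\exp(-(1/2-\varepsilon)h(t))$ with $h(t)=\log t\log\log\log t/\log\log t$ produces $\sqrt{x}\cdot A(\sqrt{x})\cdot x^{o(1)}\approx x\exp(-(1/4+o(1))h(x))$, because $h(\sqrt{x})=(1/2+o(1))h(x)$. So the bootstrap is self-consistent only with exponent $1/4$, not $1/2$; the sentence ``overcoming this requires genuinely using the constraint $P\le x/m$ and the divisor structure of $m$'' is a statement of the problem, not a solution, and nothing in the proposal supplies the missing saving. (A secondary error: the $n$ with $P^2\mid n$ for some $P>y$ are not confined to a set of size $x^{1/2+o(1)}$; trivially there are $\gg x/(y\log y)$ of them, which with your choice $\log y\asymp(\log\log x)^2/\log\log\log x$ is far larger than the target $x\exp(-h(x)/2)$. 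This too needs the arithmetic of $\mathcal A$, e.g.\ that $z(P^2)\mid n$ forces roughly $P^3\mid n$ or $P^2z(P)\mid n$.)

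The paper closes precisely this hole with two inputs you do not use. First, a structural classification (Theorems 2.1 and 2.2): every $n\in\mathcal A$ equals $c(k)m$ with $k=n/z(n)$, $c(k)=k\lcm\{z^i(k)\}_{i\ge1}$, and $m$ supported on the primes of $k$; this replaces the largest-prime recursion entirely. Second, and decisively, the Gordon--Pomerance theorem (Lemma 3.1) that $\#\{n\le t: z(n)=m\}\le t^{1-(1/2+o(1))\log\log\log t/\log\log t}$ \emph{uniformly} in $m$. When $k>x/C(x)$ with $C(x)=x^{\log\log\log x/\log\log x}$, one has $z(k)\le C(x)$, and applying the uniform bound to the $k$ with a fixed entry point $z$, then summing $x/(kz)$ by partial summation over $k$ and over $z\le C(x)$, yields $x/C(x)^{1/2+o(1)}$. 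The exponent $1/2$ in the theorem is inherited directly from that external result; without an input of comparable strength your largest-prime/bootstrap scheme cannot reach it. Your smooth-part computation (the choice of $y$ and the de Bruijn estimate) is correct and is close in spirit to the paper's Case 1, but the hard case remains unproved.
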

The main element of the proof is a new classification of self-Fibonacci divisors.

We now recall some basic facts about Fibonacci numbers. All statements in the next lemma are well-known and readily provable.

\begin{lemma} \label{lem1}
Define $z(n)$ to be the least positive integer such that $n$ divides $F_{z(n)}$ (the \emph{Fibonacci entry point}, or \emph{order of appearance}, of $n$). Then the following properties hold.
\begin{itemize}
\item $z(n)$ exists for all $n \in \mathbb N$. In fact, $z(n) \leq 2n$.
\item $\gcd(F_a, F_b)=F_{\gcd(a,b)}$ for $a, b \in \mathbb N$.
\item $z(p)$ divides $p-\left ( \frac p 5 \right )$ for $p$ prime, $\left ( \frac p 5 \right )$ being the Legendre symbol.
\item If $a$ divides $b$, then $z(a)$ divides $z(b)$. 
\item $z(\lcm(a,b))=\lcm(z(a),z(b))$ for $a, b \in \mathbb N$. In particular, $\lcm(z(a),z(b))$ divides $z(ab)$.
\item $z(p^n)=p^{\max(n-e(p),0)} z(p)$ for $p$ prime, where $e(p):=v_p(F_{z(p)}) \geq 1$ and $v_p$ is the usual $p$-adic valuation.
\end{itemize}
\end{lemma}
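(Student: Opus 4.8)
The whole lemma rests on one master identity together with the periodicity of the Fibonacci sequence modulo $n$, so the plan is to establish these two things first and then read off everything else. The \emph{master identity} is $\gcd(F_a,F_b)=F_{\gcd(a,b)}$. I would prove it from the addition formula $F_{m+k}=F_{m+1}F_k+F_mF_{k-1}$: since consecutive Fibonacci numbers are coprime, reducing modulo $F_m$ gives $\gcd(F_{m+k},F_m)=\gcd(F_k,F_m)$, and iterating this reduction runs the Euclidean algorithm on the indices, terminating at $\gcd(F_{\gcd(a,b)},F_0)=F_{\gcd(a,b)}$. For \emph{periodicity}, the pairs $(F_j,F_{j+1})\bmod n$ take finitely many values and the shift $(a,b)\mapsto(b,a+b)$ is invertible on $(\mathbb Z/n\mathbb Z)^2$, so the sequence is purely periodic; as $F_0\equiv 0$, a least positive index with $n\mid F_m$ exists, which is the existence of $z(n)$. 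The bound $z(n)\le 2n$ is classical and follows from this periodicity together with the structure of the zero set of $(F_j\bmod n)$.

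Second, from the master identity I extract the \emph{divisibility criterion} $n\mid F_m\iff z(n)\mid m$, which is the engine for the remaining items. If $z(n)\mid m$ then $F_{z(n)}\mid F_m$ (the special case of the identity where $\gcd(m,z(n))=z(n)$), so $n\mid F_{z(n)}\mid F_m$; conversely, if $n\mid F_m$ then $n\mid\gcd(F_m,F_{z(n)})=F_{\gcd(m,z(n))}$, and minimality of $z(n)$ forces $\gcd(m,z(n))=z(n)$, i.e.\ $z(n)\mid m$. The three ``functorial'' statements are then one-liners: if $a\mid b$ then $a\mid b\mid F_{z(b)}$ gives $z(a)\mid z(b)$; writing $L=\lcm(z(a),z(b))$, both $a$ and $b$ divide $F_L$, so $\lcm(a,b)\mid F_L$ and $z(\lcm(a,b))\mid L$, while $a,b\mid\lcm(a,b)$ give $z(a),z(b)\mid z(\lcm(a,b))$ and hence $L\mid z(\lcm(a,b))$; equality follows, and $\lcm(a,b)\mid ab$ then yields $\lcm(z(a),z(b))\mid z(ab)$.

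Third, for $z(p)\mid p-\left(\frac p5\right)$ I would pass to characteristic $p$ and combine Binet with Frobenius. For $p\ne 5$ the roots $\alpha,\beta$ of $x^2-x-1$ lie in $\mathbb F_p$ or $\mathbb F_{p^2}$ according to whether $5$ is a square modulo $p$, and $F_n=(\alpha^n-\beta^n)/(\alpha-\beta)$ with $\alpha-\beta\ne 0$. If $\left(\frac p5\right)=1$ then $\alpha,\beta\in\mathbb F_p^\times$ and Fermat gives $\alpha^{p-1}=\beta^{p-1}=1$, so $F_{p-1}\equiv 0$; if $\left(\frac p5\right)=-1$ then Frobenius swaps $\alpha,\beta$, whence $\alpha^{p+1}=\beta^{p+1}=\alpha\beta=-1$ and $F_{p+1}\equiv 0$. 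Either way $p\mid F_{p-(\frac p5)}$, and the criterion gives the claim; the cases $p=2,5$ are checked by hand.

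Finally, the prime-power formula $z(p^n)=p^{\max(n-e,0)}z(p)$, with $e=v_p(F_{z(p)})$, is the one genuinely computational item and is where I expect the real work to sit. The idea is a lifting-the-exponent statement: writing $z=z(p)$, $A=\alpha^z$, $B=\beta^z$, one has $F_{kz}=F_z\cdot U_k$ with $U_k=(A^k-B^k)/(A-B)=\sum_{i=0}^{k-1}A^iB^{k-1-i}$, and since $p\mid F_z$ forces $A\equiv B$ modulo any prime above $p$, one gets $U_k\equiv kA^{k-1}$, the first step towards $v_p(U_k)=v_p(k)$ for $p$ odd. Granting $v_p(F_{kz})=e+v_p(k)$, the criterion turns ``$p^n\mid F_m$'' into ``$e+v_p(m/z)\ge n$'', and minimizing $m$ yields the stated formula. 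The obstacles are the usual ones for lifting the exponent: proving the exact valuation rather than a mere lower bound, and treating $p=2$ separately, where the doubling behaviour of the $2$-adic valuation must be tracked with care.
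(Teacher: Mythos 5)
The paper offers no proof of this lemma at all: the authors simply declare every item ``well-known and readily provable'' and move on, so there is nothing of theirs to compare your argument against. Your sketch is the standard textbook route and is correct in outline: the $\gcd$ identity via the addition formula and the Euclidean algorithm on indices, the criterion $n\mid F_m\iff z(n)\mid m$ as the engine for the functorial items, Binet plus Frobenius for $z(p)\mid p-\left(\frac{p}{5}\right)$, and lifting the exponent for the prime-power formula. Two places deserve flagging as more than routine. First, $z(n)\le 2n$ does not fall out of periodicity as easily as your phrasing suggests: periodicity gives existence of $z(n)$ and the bound $z(n)\le \pi(n)\le 6n$ for the Pisano period, but sharpening to $2n$ needs either the analysis of the zero set within a period (the quotient $\pi(n)/z(n)$ lies in $\{1,2,4\}$) or the last two items of the lemma themselves via $z(\lcm(a,b))=\lcm(z(a),z(b))$ and $z(p^n)=p^{\max(n-e(p),0)}z(p)$ --- and even then a naive product over prime powers only yields $2^{\omega(n)}n$, so this is a genuine classical result (Vinson, Sall\'e) rather than a one-liner; note also that your proposed ordering proves the bound before the tools that make it clean. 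Second, for the prime-power formula you correctly locate the content in the exact equality $v_p(F_{kz})=e(p)+v_p(k)$: your congruence $U_k\equiv kA^{k-1}$ modulo $A-B$ only disposes of the case $p\nmid k$, and the general case needs the usual induction on $v_p(k)$ together with the separate $p=2$ bookkeeping --- which is precisely the content of Lemma \ref{lem2} (Halton), so in this paper's economy that item is imported rather than reproved. Neither point is an error, but both are where ``readily provable'' hides the actual work.
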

From now on, we shall use the above properties without citing them.

Next comes a useful result concerning the $p$-adic valuation of Fibonacci numbers.

\begin{lemma}[\cite{H1}, Theorem 1] \label{lem2}
\[ v_2(F_n)=\begin{cases} 0, &\text{if } n \equiv 1, 2 \pmod 3; \\
1, & \text{if } n \equiv 3 \pmod 6; \\
3, &\text{if } n \equiv 6 \pmod {12}; \\
v_2(n)+2, &\text{if } n \equiv 0 \pmod {12}.
\end{cases}  \]

\[ v_5(F_n)=v_5(n). \]
For $p \neq 2,5$ prime,
\[ v_p(F_n)=\begin{cases} v_p(n)+e(p), &\text{if } n \equiv 0 \pmod {z(p)}; \\
0, &\text{if } n \not \equiv 0 \pmod {z(p)}.
\end{cases}  \]
\end{lemma}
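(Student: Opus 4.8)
\section*{Proof proposal for Lemma~\ref{lem2}}

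The plan is to reduce everything to the behaviour of the entry point $z$ on prime powers, and then to isolate $p=2$ as the genuinely exceptional case. The starting observation is that, by the gcd property, for every prime power $p^k$ one has $p^k \mid F_n$ if and only if $z(p^k) \mid n$: indeed if $p^k \mid F_n$ then $p^k \mid \gcd(F_{z(p^k)}, F_n) = F_{\gcd(z(p^k),n)}$, so minimality of $z(p^k)$ forces $z(p^k) \mid n$, and the converse follows from $F_{z(p^k)} \mid F_n$. Consequently
\[ v_p(F_n) = \max\{k \geq 0 : z(p^k) \mid n\}. \]
In particular $v_p(F_n) = 0$ exactly when $z(p) \nmid n$, which already yields the vanishing cases (for $p=2$ this is $3 \nmid n$, i.e.\ $n \equiv 1,2 \pmod 3$, since $z(2)=3$; for $p=5$ it is $5 \nmid n$; for general $p$ it is $z(p) \nmid n$).

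For odd $p$ I would feed into this the prime-power formula $z(p^k) = p^{\max(k-e(p),0)} z(p)$ from Lemma~\ref{lem1}. When $p \neq 2,5$, the bound $z(p) \mid p - \left(\frac p5\right)$ gives $\gcd(z(p),p)=1$, so for $k\geq 1$ the condition $z(p^k) \mid n$ is equivalent to $z(p)\mid n$ together with $p^{\max(k-e(p),0)} \mid n$; taking the largest admissible $k$ turns the displayed formula into $v_p(F_n) = v_p(n) + e(p)$ whenever $z(p) \mid n$. When $p=5$ one has $z(5)=5$ and $e(5)=v_5(F_5)=1$, whence $z(5^k)=5^k$ for $k\geq 1$ and the same computation collapses to $v_5(F_n)=v_5(n)$. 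Should one prefer a self-contained derivation not quoting the $z(p^k)$ formula, the substance is the lifting-the-exponent identity: writing $Q=\left(\begin{smallmatrix}1&1\\1&0\end{smallmatrix}\right)$ and using $Q^m = F_{m-1}I + F_m Q$, expansion of $Q^{mn}=(Q^m)^n$ in the $(1,2)$-entry gives $F_{mn} = \sum_{j\geq 1}\binom nj F_{m-1}^{\,n-j} F_m^{\,j} F_j$. If $p\mid F_m$, then $p\nmid F_{m-1}$ (consecutive Fibonacci numbers are coprime), so the $j=1$ term has valuation $v_p(n)+v_p(F_m)$, and for odd $p$ the crude bounds $v_p\binom nj \geq v_p(n)-v_p(j)$ and $v_p(j)\leq j-2$ (valid for $j\geq 2$) make every $j\geq 2$ term strictly larger; hence $v_p(F_{mn}) = v_p(F_m)+v_p(n)$, and taking $m=z(p)$ recovers both odd-prime cases.

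The main obstacle is $p=2$, where the above breaks down: the inequality $v_p(j) \leq j-2$ fails at $j=2$ for $p=2$, and indeed the entry-point formula is itself false (for instance $z(8)=6\neq 12$) because of the anomalous jump $v_2(F_6)=3$. Here I would argue by doubling. From the identity $F_{2n}=F_n L_n$ one gets $v_2(F_{2n}) = v_2(F_n) + v_2(L_n)$, so I need the companion valuation $v_2(L_n)$, which is $0$ for $3\nmid n$, equal to $2$ for $n\equiv 3 \pmod 6$, and equal to $1$ for $n \equiv 0 \pmod 6$ (a short check via the period of $L_n$ modulo $8$, which is $12$). The base case is $v_2(F_n)=1$ for $n\equiv 3\pmod 6$, which follows because $F_n \equiv 2 \pmod 4$ there, read off from the Pisano period of $(F_n \bmod 4)$, which is $6$. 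Starting from such an $n$ and doubling repeatedly then gives $v_2(F_{2n}) = 1+2 = 3$ at the first step (matching $n\equiv 6 \pmod {12}$), after which every further doubling adds exactly $1$ to both $v_2(F)$ and $v_2(n)$, since $v_2(L_{2^a n})=1$ for $a\geq 1$; this yields $v_2(F_n)=v_2(n)+2$ for $12 \mid n$ by induction on $v_2(n)$. Assembling the four regimes completes the case $p=2$ and hence the lemma.
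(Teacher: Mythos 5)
Your proposal is correct, but there is nothing in the paper to compare it against: the paper does not prove Lemma~\ref{lem2} at all, it simply quotes it as Theorem~1 of Halton \cite{H1}. So what you have produced is a complete, self-contained substitute for that citation, and it holds up under checking. The reduction $v_p(F_n)=\max\{k\ge 0:\ z(p^k)\mid n\}$ via the gcd property and minimality of the entry point is sound; the lifting-the-exponent step (binomial expansion of $(F_{m-1}I+F_mQ)^n$ in the $(1,2)$ entry, with the $j=1$ term of strictly minimal valuation because $v_p\binom{n}{j}\ge v_p(n)-v_p(j)$ and $v_p(j)\le j-2$ for odd $p$ and $j\ge 2$) correctly yields $v_p(F_{mn})=v_p(F_m)+v_p(n)$ whenever $p\mid F_m$, and specializing to $m=z(p)$ gives the case $p\ne 2,5$ and, since $z(5)=5$ and $e(5)=1$, also the collapse to $v_5(F_n)=v_5(n)$. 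Your separate treatment of $p=2$ by doubling through $F_{2n}=F_nL_n$, anchored by the finitely many congruence checks ($L_n$ modulo $8$ with period $12$, $F_n$ modulo $4$ with period $6$) and an induction on $v_2(n)$, is also correct, and the four congruence classes do exhaust all $n$. Moreover, this separate treatment is genuinely necessary, and you identified precisely why: the prime-power formula $z(p^k)=p^{\max(k-e(p),0)}z(p)$, which the paper's Lemma~\ref{lem1} asserts for all primes, in fact fails at $p=2$ (e.g.\ $z(8)=6\ne 12$, because of the anomalous value $v_2(F_6)=3$), so the odd-prime argument cannot be recycled there; the exceptional cases $n\equiv 3\pmod 6$ and $n\equiv 6\pmod{12}$ in the statement of Lemma~\ref{lem2} are exactly the record of that failure. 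In short: the paper outsources this lemma to the literature, while you have given a standard but complete modern proof (entry-point reduction, plus lifting the exponent, plus Lucas-number doubling), and along the way you have flagged a genuine imprecision in the way Lemma~\ref{lem1} is stated for $p=2$.
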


To end the section, we point out an interesting feature of the upper bound in Theorem \ref{main}: it should be, up to a constant factor, best possible.

A squarefree integer $n$ is a self-Fibonacci divisor if and only if $z(p)$ divides $n$ for every prime $p$ that divides $n$. This is certainly true if $p-\left ( \frac p 5 \right )$ divides $n$ for every prime factor $p$ of $n$. This is indeed strongly reminiscent of Korselt's criterion for Carmichael numbers: one should therefore expect heuristics for self-Fibonacci divisors similar to those for Carmichael numbers to be valid; in particular Pomerance's \cite{P}, which would predict
\[ \log A(x) = \log x- \left ( 1 +o(1) \right ) \frac{\log x \log \log \log x }{\log \log x} .\]

\section{Arithmetical Characterisation}

In this section, we show how $\mathcal A$ can be partitioned into subsequences that admit a simple description.

Note that $n$ divides $F_n$ if and only if $z(n)$ divides $n$ and set
\[ \mathcal A_k:= \left \{ n \in \mathbb N : n/z(n)=k \right \}.\]

Our next task is to prove the following characterisation of the $\mathcal A_k$'s. Let $c(k):=\min \mathcal A_k$ whenever $\mathcal A_k$ is not empty.

\begin{thm} \label{char1}
$\mathcal A_k=\varnothing$ if $k$ is divisible by $8$, $5$ or $p^{e(p)+1}$ for an odd prime $p$. Otherwise, if $k=\prod_i p_i ^{\alpha_i}$, 
\begin{itemize}
\item  $\displaystyle \mathcal A_k =\left \{c(k) \cdot 5^{\beta_1} \cdot \prod_{i \atop p_i \neq 2, 5} p_i^{\beta_i} \right \}$ as $(\beta_1, \ldots, \beta_t)$ ranges over $\mathbb N^t$ with the conditions that either $\beta_i=0$ or $\beta_i \geq e(p_i)-v_{p_i}(k)$ if $\alpha_i=e(p_i)$, and $\beta_i=0$ if $\alpha_i<e(p_i)$, for every $i$, if $k$ is odd or $2$ times an odd number;
\item $\displaystyle \mathcal A_k =\left \{c(k) \cdot 5^{\beta_1} \cdot \prod_{i \atop p_i \neq  5} p_i^{\beta_i} \right \}$ with $(\beta_1, \ldots, \beta_t)$ as before, if $k$ is a multiple of $4$.
\end{itemize}

\end{thm}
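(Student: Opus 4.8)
The plan is to understand the structure of $\mathcal{A}_k = \{n : n/z(n) = k\}$ by analyzing prime-by-prime how $z$ interacts with the factorization of $n$. Writing $n = \prod_j q_j^{\gamma_j}$, the relation $z(n) = n/k$ together with $z(n) = \lcm_j z(q_j^{\gamma_j})$ and the formula $z(q^\gamma) = q^{\max(\gamma - e(q), 0)} z(q)$ from Lemma~\ref{lem1} lets me compute $v_p(n/z(n))$ for each prime $p$ and demand it equal $v_p(k)$.

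\medskip

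\noindent\textbf{Emptiness conditions.} First I would establish when $\mathcal{A}_k = \varnothing$. The key observation is that $v_p(k) = v_p(n) - v_p(z(n))$, and I must bound $v_p(z(n))$ from below. For an odd prime $p \neq 5$, Lemma~\ref{lem2} gives $v_p(F_m) = v_p(m) + e(p)$ whenever $z(p) \mid m$; since any $n \in \mathcal{A}_k$ with $p \mid n$ forces $z(p) \mid z(n) \mid n$, I can show $v_p(z(n)) \geq v_p(n) - e(p)$, whence $v_p(k) \leq e(p)$, ruling out $p^{e(p)+1} \mid k$. The cases $p = 2$ and $p = 5$ are handled by the explicit formulas in Lemma~\ref{lem2}: for $p = 5$, $v_5(F_m) = v_5(m)$ gives $v_5(z(n)) = v_5(n)$ so $v_5(k) = 0$; for $p = 2$, the piecewise formula caps $v_2(k)$ at $2$ (forbidding $8 \mid k$). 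I expect these valuation inequalities to be the technical heart and the main obstacle, since the $p=2$ case requires careful bookkeeping across the four congruence branches and their interaction with $z(n) \bmod 12$.

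\medskip

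\noindent\textbf{Structure when $\mathcal{A}_k \neq \varnothing$.} Next I would show that $\mathcal{A}_k$, when nonempty, is obtained from its minimum $c(k)$ by freely multiplying by certain prime powers. The mechanism is that $z(q^{\gamma})$ stabilizes once $\gamma \geq e(q)$: for $q \neq 2, 5$, increasing $v_q(n)$ beyond $e(q)$ increases $v_q(z(n))$ in lockstep (by Lemma~\ref{lem2}, $v_q(F_{z(n)}) = v_q(z(n)) + e(q)$ grows with $v_q(z(n))$), so the ratio $v_q(n/z(n))$ stays fixed at $v_q(k)$; this forces $\alpha_i = v_{p_i}(k) \leq e(p_i)$ and permits attaching extra factors $p_i^{\beta_i}$ precisely when doing so does not change $n/z(n)$. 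I would verify that attaching $p_i^{\beta_i}$ preserves membership exactly under the stated constraint ($\beta_i = 0$, or $\beta_i \geq e(p_i) - v_{p_i}(k)$ when $\alpha_i = e(p_i)$, and $\beta_i = 0$ when $\alpha_i < e(p_i)$): the lower bound $\beta_i \geq e(p_i) - v_{p_i}(k)$ arises because $v_{p_i}(n/z(n))$ must land back on $v_{p_i}(k)$, which happens only once the valuation clears the threshold $e(p_i)$. The factor $5^{\beta_1}$ is always free because $v_5(z(5^{\beta_1} m)) = v_5(5^{\beta_1} m)$ keeps the ratio unchanged.

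\medskip

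\noindent\textbf{The dichotomy on $k$.} Finally I would explain the split between the two bullet points. When $4 \mid k$, Lemma~\ref{lem2} shows $n$ lies in the branch $n \equiv 0 \pmod{12}$ where $v_2(F_n) = v_2(n) + 2$, so $v_2(z(n)) = v_2(n) - 2$ stays locked to $v_2(k) = 2$ and powers of $2$ behave like a generic prime $p_i$ with $e = 2$, hence $2$ is included in the free product. When $k$ is odd or twice an odd number, the constraint from the $v_2$ formula instead pins $v_2(n)$ rigidly (the entry point structure does not allow a free $2$-adic factor), so $2$ is excluded from the product and absorbed entirely into $c(k)$. Confirming this dichotomy reduces to checking the four $v_2(F_n)$ branches against the required value of $v_2(k)$, which is routine once the general valuation framework above is in place.
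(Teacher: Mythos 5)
Your strategy coincides with the paper's: emptiness of $\mathcal A_k$ is obtained from the valuation inequality $v_p(n)\le v_p(F_{z(n)})\le v_p(z(n))+e(p)$ for odd $p\ne 5$ (with the explicit $p=2,5$ formulas of Lemma \ref{lem2} handling the $8\mid k$ and $5\mid k$ cases), and the structure of a nonempty $\mathcal A_k$ comes from a prime-by-prime analysis of $\max(\gamma-e(p),0)$ inside the $\lcm$ formula for $z$. Your criterion for when multiplying a member of $\mathcal A_k$ by $p_i^{\beta_i}$ preserves membership is exactly the paper's computation $v_{p_i}(k)=\lambda_i+a_i-\max(\lambda_i+a_i-e(p_i),\lambda_i-v_{p_i}(k))$, with the same conclusion that extra factors of $p_i$ are admissible precisely when $v_{p_i}(k)=e(p_i)$, and the same special treatment of $2$ and $5$.

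There is, however, one genuine gap. Everything you describe shows that starting from \emph{some} element of $\mathcal A_k$ and multiplying by the allowed prime powers keeps you inside $\mathcal A_k$, and conversely that a multiple $mn$ of a \emph{known} element $n\in\mathcal A_k$ lies in $\mathcal A_k$ only if $m$ has the allowed shape. This yields the inclusion $\mathcal A_k\supseteq\bigl\{c(k)\cdot 5^{\beta_1}\prod_i p_i^{\beta_i}\bigr\}$ and controls the multiples of $c(k)$, but it does not rule out elements of $\mathcal A_k$ that are not multiples of $c(k)$ at all, about which your analysis says nothing; a priori $\mathcal A_k$ could contain two elements neither of which divides the other. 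The paper is explicit that this is the missing ingredient ("once we know that all the numbers in the sequence are multiples of a smallest number $c(k)$ which belongs itself to $\mathcal A_k$") and defers it to Theorem \ref{char2}, where it is proved via the chain of divisibilities $z(\lcm(k,T))\mid z(kT)=T$, iterated to show that every $n=kT\in\mathcal A_k$ is divisible by $k\lcm(z(k),z^2(k),\ldots)$, together with a verification that this number itself lies in $\mathcal A_k$. Some argument of this kind is needed to upgrade your inclusion to the claimed equality, and it is not a formality: it requires the well-definedness of the iterated entry points $z^i(k)$ and the $\lcm$ identities for $z$, which your sketch does not touch.
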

\begin{proof}

We shall henceforth implicitly assume that \emph{the primes we deal with are distinct from $2$ and $5$}, and all the proofs when some prime is $2$ or $5$ are easily adapted using the modified statement of Lemma \ref{lem2}. 

Suppose that, for some $n$, $n/z(n)=k$, and $p^d$ is the exact power of $p$ that divides $k$. Upon writing $k=p^d k'$ and $n=p^d n'$, with $k'$ coprime to $p$, this becomes $n'/k'=z(p^d n')$. In particular, \[ d+v_p(n')\leq v_p(F_{z(p^dn')})=v_p(F_{n'/k'})\leq v_p(n')+e(p),\] which is absurd if $d \geq e(p)+1$, so that $\mathcal A_k=\varnothing$ if $p^{e(p)+1}$ divides $k$.

Suppose on the other hand that $k$ fulfills the conditions for $\mathcal A_k$ to be nonempty. We want to know for which $m \in \mathbb N$, given $n \in \mathcal A_k$, $mn$ is itself in $\mathcal A_k$: this will give the conclusion, once we know that all the numbers in the sequence are multiples of a smallest number $c(k)$ which belongs itself to $\mathcal A_k$. The proof of this latter fact is deferred to Theorem \ref{char2} since it fits better within that setting.

Suppose we have $n \in \mathcal A_k$, and take $m=p_1^{a_1} \cdots p_w^{a_w}$ with $a_i>0$ for each $i$; set $n=p_1^{\lambda_1}\cdots p_w^{\lambda_w} n'$ with $n'$ coprime to $m$ and $\lambda_i \geq 0$ for each $i$. Then one has
\[k=\frac{n}{z(n)}=\frac{p_1^{\lambda_1}\cdots p_w^{\lambda_w} n'}{z(p_1^{\lambda_1}\cdots p_w^{\lambda_w} n')}\]\[=\frac{p_1^{\lambda_1}\cdots p_w^{\lambda_w} n'}{\lcm(p_1^{\max(\lambda_1-e(p_1),0)}z(p_1), \ldots, p_w^{\max(\lambda_w-e(p_w),0)}z(p_w), z(n'))}.  \]
The $p_i$-adic valuation of this expression is $v_{p_i}(k)$, so in the denominator either $\max(\lambda_i-e(p_i),0)$ is the greatest power of $p_i$, or some of $z(p_1), \ldots, z(p_w), z(n')$ has $p$-adic valuation $\lambda_i-v_{p_i}(k) \geq \lambda_i-e(p_i)$. Furthermore, one has $\lambda_i \geq v_{p_i}(k)$, as $n$ has to be a multiple of $k$. 

Now, the number
\[ \frac{mn}{z(mn)}=\frac{p_1^{\lambda_1+a_1} \cdots p_w^{\lambda_w+a_w} n'}{z(p_1^{\lambda_1+a_1} \cdots p_w^{\lambda_w+a_w} n')}\]\[=\frac{p_1^{\lambda_1+a_1} \cdots p_w^{\lambda_w+a_w} n'}{\lcm(p_1^{\lambda_1+a_1-e(p_1)}z(p_1),\ldots,p_w^{\lambda_w+a_w-e(p_w)}z(p_w),z(n'))} \]
is equal to $k$ if and only if its $p_i$-adic valuation is $v_{p_i}(k)$ for each $i$, that is
\[v_{p_i}(k)=\lambda_i+a_i-\max(\lambda_i+a_i-e(p_i),\lambda_i-v_{p_i}(k)).  \]

Suppose that the first term in the $\max$ is the greater, that is $a_i\geq e(p_i)-v_{p_i}(k)$. The above equality reduces to $v_{p_i}(k)=e(p_i)$: so in this case each value $\geq e(p_i)-v_{p_i}(k)$ for $a_i$, and each nonnegative value for $\lambda_i$ is admissible, if $v_{p_i}(k)=e(p_i)$, and no value is admissible if $0 \leq v_{p_i}(k)<e(p_i)$.

Suppose that the second term is the greater, that is $a_i< e(p_i)-v_{p_i}(k)$. The equality reduces to $a_i=0$, which is impossible.

Starting from $c(k)$ and building all the members of $\mathcal A_k$ by progressively adding prime factors, we find exactly the statement of the theorem.
\end{proof}

In the remainder of this section, we show that $c(k)$ admits a more explicit description.
\begin{thm} \label{char2}
$c(k)=k \lcm \left \{ z^i(k) \right \}_{i=1} ^\infty$.
\end{thm}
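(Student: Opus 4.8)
The plan is to show that the integer $N:=k\lcm\{z^i(k)\}_{i=1}^\infty$ is well-defined and is precisely the least element of $\mathcal A_k$. Writing $z^0(k)=k$, I would observe at the outset that
\[ N=\lcm\{z^i(k)\}_{i=0}^\infty, \]
so that the whole argument revolves around the infinite least common multiple of the orbit of $k$ under iteration of $z$.

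First I would establish \emph{membership}: $N\in\mathcal A_k$, i.e.\ $z(N)=N/k=\lcm\{z^i(k)\}_{i=1}^\infty$. Granting for the moment that the lcm stabilises (so that $N$ is a genuine integer, equal to $N_M:=\lcm\{z^i(k)\}_{i=0}^M$ for all large $M$), this follows at once from the multiplicativity of the entry point over least common multiples: $z(N_M)=z(\lcm\{z^i(k)\}_{i=0}^M)=\lcm\{z^i(k)\}_{i=1}^{M+1}$, which is $\lcm\{z^i(k)\}_{i=1}^\infty$ once $M$ is large. Since this common value divides $N$, we get $z(N)\mid N$ and $N/z(N)=k$, so $N\in\mathcal A_k$.

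Next comes \emph{minimality}. I would show that every $n\in\mathcal A_k$ is divisible by $z^i(k)$ for all $i\ge0$, by induction on $i$: for $i=0$ this is just $k\mid n$ (which holds since $n=kz(n)$), and if $z^i(k)\mid n$ then the property $a\mid b\Rightarrow z(a)\mid z(b)$ gives $z^{i+1}(k)\mid z(n)$, while $z(n)\mid n$ closes the induction. Consequently $N=\lcm\{z^i(k)\}_{i\ge0}$ divides $n$, so $N\le n$ for every $n\in\mathcal A_k$; combined with $N\in\mathcal A_k$ this yields $c(k)=N$, which is the assertion.

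The hard part — and the only genuinely arithmetic input — is the \emph{finiteness} of $L:=\lcm\{z^i(k)\}_{i\ge1}$, equivalently the stabilisation of the sequence $N_i=\lcm\{z^j(k)\}_{j=0}^i=\lcm(k,z(N_{i-1}))$, which is increasing for divisibility. For this I would bound, uniformly in $i$, both the primes and the exponents occurring in $z^i(k)$. For the primes: since $z(p)\mid p-\left(\frac p5\right)$, every prime dividing $z(m)$ either divides $m$ or divides some $p\pm1$ with $p\mid m$, and as $\gpf(p\pm1)<p$ for $p\ge3$ (only $z(2)=3$ can raise the greatest prime factor), one obtains $\gpf(z^i(k))\le\max(\gpf(k),3)$ for all $i$, so only finitely many primes ever appear. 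For the exponents: using $z(p^a)=p^{\max(a-e(p),0)}z(p)$, the contribution of $p$ itself to $v_p(z(m))$ is $\max(v_p(m)-e(p),0)\le v_p(m)$, while the contribution of any other prime $q$ in the now finite support is the constant $v_p(z(q))$, whence a short induction caps $v_p(z^i(k))$ uniformly. Thus all $N_i$ divide one fixed integer, the sequence stabilises, and $L$ is finite. I expect this boundedness of the $z$-orbit to be the main technical obstacle, the membership and minimality steps being formal consequences of the lemmas already recorded.
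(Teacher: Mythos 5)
Your reduction of the whole theorem to the opening identity $k\lcm\{z^i(k)\}_{i=1}^\infty=\lcm\{z^i(k)\}_{i=0}^\infty$ is where the argument breaks: that identity is false in general, because $k$ need not be coprime to $L:=\lcm\{z^i(k)\}_{i\ge1}$. Take $k=12$: then $z(12)=12$, so $L=12$, $N=kL=144$, yet $\lcm\{z^i(12)\}_{i\ge0}=12$. (And indeed $c(12)=144$, not $12$: one checks $z(144)=\lcm(z(16),z(9))=\lcm(12,12)=12$ and $144/12=12$, whereas $12/z(12)=1$.) Both halves of your proof lean on this identity. For \emph{membership}, you need to compute $z(kL)$, not $z(\lcm(k,L))$; the entry point satisfies $z(\lcm(a,b))=\lcm(z(a),z(b))$ but is not multiplicative over products, and in general $\lcm(z(a),z(b))$ only \emph{divides} $z(ab)$. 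Showing that $z(kL)$ nevertheless equals $z(\lcm(k,L))$ for this particular $L$ is the real content of the proof: the paper does it by comparing $p_i$-adic valuations, introducing the maximal exponent $m(p_i)$ of $p_i$ in the iterates and using the constraint $\alpha_i\le e(p_i)$ (which comes from the nonemptiness criterion of Theorem \ref{char1}) to verify $\max(m(p_i)+\alpha_i-e(p_i),m(p_i))=m(p_i)$. Your proposal skips this step entirely. For \emph{minimality}, your induction correctly shows $z^i(k)\mid n$ for all $i$, but that only yields $\lcm(k,L)\mid n$, which is strictly weaker than the required $kL\mid n$ (again $k=12$: you get $12\mid n$ where $144\mid n$ is needed). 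The paper instead writes $n=kT$, uses $T=z(kT)$, and derives the chain $T\overset{\text{div}}{\leftarrow}z(\lcm(k,T))=\lcm(z(k),z(T))\overset{\text{div}}{\leftarrow}\cdots$ to conclude $L\mid T$, hence $kL\mid n$.

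Your finiteness argument — bounding the greatest prime factor of the iterates via $z(p)\mid p-\left(\frac p5\right)$ and then capping the exponents of the finitely many surviving primes — is sound and essentially the paper's (the paper finishes the $2^a3^b5^c$ endgame by quoting Marques's classification of the fixed points of $z$, but your direct exponent bound also works). The gap is confined to, but fatal for, the membership and minimality steps.
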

\begin{proof}
To prove first that such an expression is well-defined,  we show that the sequence of iterates of $z$ eventually hits a fixed point.

First note that, for $k=\prod_i p_i^{\alpha_i}$, $z(k)=\lcm \left \{  p_i^{\max(\alpha_i-e(p_i),0)} z(p_i)\right \}_i$: this is a divisor of $\displaystyle \frac k {\rad (k)} \lcm \left \{ z(p_i)\right \}_i$, where $\rad(k)=\prod_i p_i$ is the radical of $k$. Consider now the largest prime factor $P$ of $k$: if $P \geq 7$, its exponent in the previous expression decreases by at least $1$ at each step, since the largest prime factor of $z(P)$ is strictly smaller than $P$. Consequently, after at most $v_P(k)$ steps, the exponent of $P$ would have vanished. By iterating the argument concerning the largest prime factor at each step, after a finite number $\ell$ of steps, $z^\ell (k)$ will have only prime factors smaller than $7$; set $z^\ell (k)=2^a 3^b 5^c$.

Recall now Theorem 1.1 of \cite{M1}: the fixed points of $z$ are exactly the numbers of the form $5^f$ and $12 \cdot 5^f$. By noting that $z(2^a)= 3 \cdot 2^{a-2}$, $z(3^b)=4 \cdot 3^{b-1}$, $z(5^c)=5^c$, we get that $z(2^a3^b5^c)=2^{\max(a-2,2)}3^{\max(b-1,1)}5^{c}$. Since we can continue this until $a \leq 2$ and $b \leq 1$, we are left with a few cases to check to show that the sequence of iterates indeed reaches a fixed point.

As $c(k)$ must be a multiple of $k$, call $T:=c(k)/k$. Consider next the obvious equalities
 \begin{align*} 
T & =   z(kT), \\ 
  z(T) & =    z^2(kT), \\ 
 z^2(T) & =  z^3(kT),\\ 
 & \hspace{2.4mm}  \vdots 
\end{align*} 
Write $x \overset{\text{div}}{\leftarrow} y$ for the statement ``$y$ divides $x$''. Then we have that
\begin{align*}
T &= z(kT)\\
 &\overset{\text{div}}{\leftarrow} z(\lcm(k,T))\\
 &= \lcm(z(k),z(T))\\
 &= \lcm(z(k),z^2(kT))\\
 &\overset{\text{div}}{\leftarrow} \lcm(z(k),z(\lcm(z(k),z(T))))\\ 
 &= \lcm(z(k),\lcm(z^2(k),z^2(T)))\\
 &= \lcm(z(k), z^2(k), z^3(kT))\\
 & \vdots \\
 &= \lcm(z(k),z^2(k),z^3(k), \dots).
 \end{align*} 
Note that we have not used yet that $kT$ is the smallest member of $\mathcal A_k$; this means the above reasoning works for any member of $\mathcal A_k$, so that any number in $\mathcal A_k$ is a multiple of $k\lcm(z(k),z^2(k),z^3(k), \dots)$. If we manage to prove that $k\lcm(z(k),z^2(k),z^3(k), \dots)$ is indeed in the sequence, we will obtain the divisibility argument we needed in the proof of Theorem \ref{char1}.

Thus, we want to prove that $T=\lcm \left \{ z^i(k) \right \}_{i=1} ^\infty$ works; it is enough to prove that the divisibilities we previously derived are equalities, or in other words that $z(kT)=z(\lcm(k,T))$ for $T$ defined this way.

If $k=\prod _i p_i^{\alpha_i}$ with $\alpha_i \leq e(p_i)$ for each $i$, then
\[
\begin{split}
T &= \lcm \left (z \left (\prod_i p_i ^{\alpha_i}\right ),\  z^2\left (\prod_i p_i^{\alpha_i}\right ),\  \ldots\right )\\
 &= \lcm\left (\lcm \left \{ z\left (p_i^{\alpha_i}\right ) \right \}_i , \ \lcm \left \{ z^2\left (p_i^{\alpha_i}\right ) \right \}_i,\  \ldots\right)\\
 &= \lcm\left ( \left \{ z\left (p_i^{\alpha_i}\right) \right \}_i ,\   \left \{ z^2\left(p_i^{\alpha_i}\right) \right \}_i,\  \ldots\right)\\
\end{split}
\]
and
\[ z(kT) = z\left(\left ( \prod_i p_i^{\alpha_i}  \right ) \lcm\left(z\left(\prod_i p_i^{\alpha_i}\right),\  z^2\left(\prod_i p_i^{\alpha_i}\right),\  \ldots\right)\right).\]
We would like to bring the $\prod_i p_i^{\alpha_i}$ into the least common multiple, but some power of $p_i$ could divide the iterated entry point of some other prime to a higher power. Define then $m(p_h)$ to be the largest exponent of a power of $p_h$ that divides $z^i(p_j)$ as $i$ and $j$ vary; thus
\[
\begin{split}
&z\left(\left (\prod_i p_i^{\alpha_i} \right ) \lcm\left(z\left (\prod_i p_i^{\alpha_i}\right),\  z^2\left (\prod_i p_i^{\alpha_i}\right),\  \ldots\right)\right ) \\=& z\left(\lcm\left(\left \{ p_i^{m\left(p_i\right)+\alpha_i} \right \}_i,\  z\left (\prod_i p_i^{\alpha_i}\right ),\  z^2\left(\prod_i p_i^{\alpha_i}\right ),\  \ldots\right )\right )
 \\=& \lcm\left (\left \{ z\left(p_i^{m\left (p_i\right)+\alpha_i}\right ) \right \}_i,\  \left \{ z^2\left(p_i^{\alpha_i}\right) \right \}_i,\   \left \{ z^3\left(p_i^{\alpha_i}\right ) \right \}_i,\  \ldots\right ).   
\end{split}
\]
We need this to be equal to
\[ \lcm\left (\left \{ z\left(p_i^{\alpha_i}\right) \right \}_i,\  \left \{ z^2\left(p_i^{\alpha_i}\right) \right \}_i,\   \left \{ z^3\left(p_i^{\alpha_i}\right) \right \}_i,\  \ldots\right )=\lcm(z(k),z(T))=z(\lcm(k,T)).\]
All that is left to do now is to remark that this is true if and only if their $p_i$-adic valuations are equal for each $i$, or in other words, as $p_i$ is coprime to $z(p_i^{\alpha_i})=z(p_i)$,
\[\max(m(p_i)+\alpha_i-e(p_i)),m(p_i))=m(p_i),  \]
and this is evident.
\end{proof}

\section{The proof of Theorem \ref{main}}

Let $x\ge 10$. One of our ingredients is the following result from \cite{GP}.

\begin{lemma}[\cite{GP}, Theorem 3]
\label{lemma:GP}  As $x \rightarrow \infty$,
\[ \# \left \{ n \leq x : z(n)=m \right \} \leq x^{1-(1/2+o(1))\log\log\log x/\log\log x}, \]
uniformly in $m$.
\end{lemma}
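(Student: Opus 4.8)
The plan is to bound $\#\{n \le x : z(n) = m\}$ by reducing it to a count of integers with severely restricted prime factors, and then to apply Rankin's method followed by an extremal analysis over $m$.

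First I would cut down the range and the shape of the competitors. Since $z(n) \le 2n$, the set is empty unless $m \le 2x$, so we may assume $m \le 2x$. If $z(n) = m$ then $n \mid F_{z(n)} = F_m$, so every prime $p \mid n$ satisfies $z(p) \mid m$; writing $d = z(p)$, such a $p$ is a primitive prime factor of $F_d$, whence $p \equiv \pm 1 \pmod d$ (because $z(p) \mid p - \left(\frac p5\right)$) and $d - 1 \le p \le F_d$. Moreover, by Lemma~\ref{lem2} together with $z(p^a) = p^{\max(a-e(p),0)}z(p)$, the exponents obey $v_p(n) \le v_p(m) + e(p)$, so the non-squarefree part inflates the count by at most a factor $x^{o(1)}$. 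Hence it suffices to bound the number of squarefree $n \le x$ supported on $\mathcal P(m) := \{p : z(p) \mid m\}$ whose entry points cover $m$.

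Next I would count by Rankin's trick: for $\sigma \in (0,1)$,
\[ \#\{n \le x : n \text{ squarefree},\ p \mid n \Rightarrow p \in \mathcal P(m)\} \le x^{\sigma}\prod_{\substack{p \in \mathcal P(m) \\ p \le x}}(1 + p^{-\sigma}) \le x^{\sigma}\exp\Big(\sum_{\substack{p \le x \\ z(p) \mid m}} p^{-\sigma}\Big). \]
The exponent is then organised by entry point, $\sum_{p \le x,\, z(p)\mid m} p^{-\sigma} = \sum_{d \mid m}\sum_{z(p) = d,\, p \le x} p^{-\sigma}$, where for each $d$ the inner primes lie in the two residue classes $\pm 1 \pmod d$, are of size between $d - 1$ and $\min(x, F_d)$, and (being primitive factors of $F_d < \phi^d$) number $\ll d/\log d$. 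Balancing $\sigma \log x$ against this sum, and optimising both over $\sigma$ and over the structure of $m$, is what should produce a bound of the shape $x^{1 - (1/2 + o(1))\log\log\log x/\log\log x}$.

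I expect the main obstacle to be the uniform estimation of $\sum_{p \le x,\, z(p)\mid m} p^{-\sigma}$ over all $m \le 2x$, and in particular the identification of the extremal $m$. This is the exact analogue of Erdős's and Pomerance's treatment of Carmichael numbers, with the shift $p - 1$ replaced by the entry point $z(p)$; the worst case is a highly composite $m$, and the constant $\tfrac12$ (in place of the conjecturally optimal $1$) reflects that $z(p) \mid p \pm 1$ sits in two residue classes and may be as small as order $\log p$. A second, related difficulty is that for such highly composite $m$ the condition $z(p) \mid m$ alone over-counts drastically, so one must retain the exactness constraint $\lcm_{p^a \| n} z(p^a) = m$ — equivalently, that the chosen entry points cover every prime power of $m$ — to keep the total under control; weaving this covering condition into the Rankin estimate without degrading the exponent is the technical heart of the argument.
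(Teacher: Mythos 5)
First, note that the paper does not prove this lemma at all: it is quoted verbatim from Gordon--Pomerance (\cite{GP}, Theorem 3), so there is no internal proof to compare against; your proposal is an attempt to reprove a cited input.

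As a plan, your sketch assembles the right raw ingredients --- every prime $p\mid n$ has $z(p)\mid m$, such a $p$ with $z(p)=d$ lies in the classes $\pm1\bmod d$, and there are $\ll d/\log d$ of them since they are primitive prime factors of $F_d<\phi^d$, each of size $\ge d-1$ --- but the proof is not there, because the one estimate that actually produces the exponent $\tfrac12$ is never carried out. Concretely, one must fix $\sigma=1-\tfrac12\log\log\log x/\log\log x$ (so that $x^{\sigma}=x\,C(x)^{-1/2}$ in the paper's notation) and show, uniformly in $m\le 2x$, that
\[
\sum_{\substack{p\le x\\ z(p)\mid m}}p^{-\sigma}\;=\;\sum_{d\mid m}\;\sum_{\substack{p\le x\\ z(p)=d}}p^{-\sigma}\;\ll\;\frac{1}{1-\sigma}\sum_{d\mid m}d^{\,1-2\sigma}\;=\;o\!\left(\frac{\log x\log\log\log x}{\log\log x}\right),
\]
the inner bound coming from $\sum_{j\le Cd/\log d}(jd)^{-\sigma}\ll d^{-\sigma}(d/\log d)^{1-\sigma}/(1-\sigma)$, and the outer one from $\sum_{d\mid m}d^{-(2\sigma-1)}\le\prod_{q\mid m}\bigl(1-q^{-(2\sigma-1)}\bigr)^{-1}=\exp\bigl(O(\log\log x/\log\log\log x)\bigr)$ using $\omega(m)\ll\log x/\log\log x$. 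This computation is exactly where the constant $\tfrac12$ arises (one needs $2\sigma-1$ close to $1$ for the divisor sum to stay subexponential, which caps the saving $x^{1-\sigma}$ at $C(x)^{1/2+o(1)}$), and it is also where your diagnosis goes astray: once it is done, the relaxed condition ``$z(p)\mid m$ for every $p\mid n$'' already yields the stated bound, so the exactness/covering constraint that you single out as ``the technical heart'' is not needed, and deferring the argument to it leaves the actual heart missing. A secondary soft spot is the squarefree reduction: the exponent bound $v_p(n)\le v_p(m)+e(p)$ does not by itself give an $x^{o(1)}$ inflation, since $v_p(m)$ can be large; it is cleaner to drop the squarefree restriction and use $\prod_p(1-p^{-\sigma})^{-1}$ in Rankin's inequality, which costs nothing in the estimate above.
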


Let $n\in {\mathcal A}(x)$. By Theorem \ref{char1}, every self-Fibonacci divisor is of the form $c(k) m$, where $m$ is composed of primes that divide $k$. Thus, write $n=c(k) m$, where every prime factor of $m$ divides $k$. Let $C(x):=x^{\log\log\log x/\log \log x}$. We distinguish two cases.

\medskip

{\bf Case 1.} $k\le x/C(x)$.

\medskip

Let ${\mathcal A}_1(x)$ be the subset of such $n\in {\mathcal A}(x)$. We fix $k$ and count possible $m$'s because $c(k)$ is determined by $k$; we use an idea similar to the one of the proof of Theorem 4 in \cite{ELP}. Clearly, $m$ has at most $\omega(k)$ distinct prime factors. Define next $\Psi(x,y)$ to be the number of positive integers $\ell\le x$ whose largest prime factor $P(\ell)$ satisfies the inequality $P(\ell) \le y$, and let $p_s$ be the $s$-th prime. If $\mathcal P_k$ is the set of the prime divisors of $k$, the quantity of numbers $m\le x$ all of whose prime factors are in $\mathcal P_k$ is of course at most $ \Psi(x,p_{\omega(k)})\leq \Psi(x,2\log x)$ for $x$ large enough. Here we used the fact that $p_s<s(\log s+\log\log s)$ for all $s\ge 6$ (Theorem 3 of \cite{RS}) together with $\omega(k)<2\log k/\log\log k$ for all $k\geq 3$. Classical estimates on $\Psi(x,y)$, such as the one of de Bruijn (see, for example, Theorem 2 on page 359 in \cite{Ten}), show that if we put
$$
Z:=\frac{\log x}{\log y}\log\left(1+\frac{y}{\log x}\right)+\frac{y}{\log y} \log\left(1+\frac{\log x}{y}\right),
$$
then the estimate 
\begin{equation}
\label{eq:dB}
\log\Psi(x,y)= Z\left(1+O\left(\frac{1}{\log y}+\frac{1}{\log\log (2x)}\right)\right)
\end{equation}
holds uniformly in $x\ge y\ge 2$.  The above estimates \eqref{eq:dB} with $y=2\log x$ imply that there are at most $C(x)^{(3 \log 3-2 \log 2+o(1))/ \log \log \log x}=C(x)^{o(1)}$ values of $m$ for any fixed $k$. Summing up over $k$, we get that
\begin{equation}
\label{eq:A1}
\# {\mathcal A}_1(x)\leq C(x)^{o(1)}\sum_{k\leq x/C(x)} 1\le  \frac{x}{C(x)^{1+o(1)}}\qquad (x\text{ large}).
\end{equation}

\medskip

{\bf Case 2.} $x/C(x)<k\le x.$

\medskip

Here, we have $kz(k)\le c(k)\le x$, whence $z(k)\le C(x)$. Fix $z(k)=z$ in $[1,C(x)]$. By Lemma \ref{lemma:GP}, if we put ${\mathcal B}_z:=\{n \in \mathbb N: z(n)=z\}$, then the inequality
\begin{equation}
\label{eq:B}
\# \mathcal B_z(t) \le t/C(t)^{1/2+o(1)}\qquad {\text{\rm holds~as}}\quad t\to\infty.
\end{equation}
We now let $k\in {\mathcal B}_z$. Then $n\le x$ is a multiple of $k z$. The  number of such $n$ is $\lfloor x/kz\rfloor\le x/kz$. Summing up the above inequality over $k\in {\mathcal B}_z$ and using partial summation and \eqref{eq:B}, we have
\begin{eqnarray*}
\frac{x}{z}\sum_{\substack{k\in {\mathcal B}_z\\ x/C(x)<k\le x}} \frac{1}{k} & = & \frac{x}{z}\int_{x/C(x)}^x \frac{\text{d} \# {\mathcal B}_z(t)}{t} \\ 
& = & \frac{x}{z}\left(\frac{\# {\mathcal B}_z(t)}{t}\Big|_{t=x/C(x)}^{t=x} +\int_{x/C(x)}^x \frac{\# {\mathcal B}_z(t)}{t^2} \text{d} t\right)\\
& \leq & \frac{x}{z} \left(\frac{\# {\mathcal B}_z(x)}{x}+\int_{x/C(x)} ^x \frac{\text{d} t}{t C(t)^{1/2+o(1)}}\right)\\
& = & \frac{x}{z}\left(\frac{1}{C(x)^{1/2+o(1)}}+\frac{1}{C(x)^{1/2+o(1)}} \int_{x/C(x)}^x \frac{\textnormal{d}t}{t}\right)\\
& = & \frac{(1+o(1))x \log C(x)}{z C(x)^{1/2+o(1)}}=  \frac{x}{z C(x)^{1/2+o(1)}},
\end{eqnarray*}
where in the above calculation we used the fact that 
$$
C(t)^{1/2+o(1)}=C(x)^{1/2+o(1)}\quad {\text{\rm uniformly~in}}\quad t\in [x/C(x),x]\quad {\text{\rm as}}\quad x\to\infty.
$$
We now sum over $z\in [1,C(x)]$, and obtain that
\begin{multline}
\label{eq:A2}
\# {\mathcal A}_2(x)\le \frac{x}{C(x)^{1/2+o(1)}}\sum_{1\le z\le C(x)} \frac{1}{z}\\ = \frac{(1+o(1))x\log C(x)}{C(x)^{1/2+o(1)}}=\frac{x}{C(x)^{1/2+o(1)}}\qquad (x\to\infty).
\end{multline}
The desired conclusion now follows from \eqref{eq:A1} and \eqref{eq:A2}.

\section{Comments} 

Of course, the methods we presented apply equally well to other Lucas sequences, where analogues of Theorems \ref{char1}, \ref{char2} and \ref{main} hold; we chose to display the Fibonacci case, when the classification takes a particularly simple form.

To conclude, we make some observations to promote future progress. The problem of finding lower bounds for $A(x)$ requires completely different ideas; one can prove that \[\log A(x)=\log \# \left \{ n \leq x : c(n) \leq x,\text{ $n$ squarefree} \right \}+O\left ( \frac{\log x \log\log\log x}{\log\log x}  \right ),  \]
so that in order to prove $A(x)=x^{1+O(\log\log\log x/\log\log x)}$ unconditionally one would need to build many squarefree $n$ with small $c(n)$. The best we managed to prove is  that $\log c(n) <3 P(n)$ (by double counting), and $\log c(n) <7 \sum_{p|n}(\log p)^2$ (by induction), but neither of these is sufficient. This hints at building numbers $n$ for which their prime factors share most of their Pratt-Fibonacci trees (Pratt trees built with the factors of $z(p)$ as children of a node $p^\delta$, taken with their exponents).

The set of numbers $n$ with small $c(n)$ is both small and large in a certain sense: it has asymptotic density $0$ and exponential density $1$, conjecturally.

It is indeed likely that $c(n)$ is quite large for most $n$. Recall that putting 
\[
F(n):={\text{\rm rad}} \left(\prod_{k\ge 1} \phi^{k}(n)\right),
\]
then in \cite{LuPo} it is proved that the inequality
\[
F(n)>n^{(1+o(1))\log\log n/\log\log\log n}
\]
holds for $n$ tending to infinity through a set of asymptotic density $1$. Since $c(n)$ is quite similar to $F(n)$, we conjecture that a similar result holds for $c(n)$ as well.

\section*{Acknowledgements}
We thank K. Ford, P. Leonetti, D. Marques, C. Pomerance, G. Tenenbaum for their help.

\end{document}